\newtheorem{theorem}{Theorem}[section]
\newtheorem{lemma}[theorem]{Lemma}
\theoremstyle{definition}
\newtheorem{definition}[theorem]{Definition}
\newtheorem{example}[theorem]{Example}
\newtheorem{proposition}[theorem]{Proposition}
\theoremstyle{remark}
\newtheorem{remark}[theorem]{Remark}
\numberwithin{equation}{section}
\begin{document}

% \title[short text for running head]{full title}
\title{Quasi-theories}

%    Only \author and \address are required; other information is
%    optional.  Remove any unused author tags.

%    author one information
% \author[short version for running head]{name for top of paper}

\author{Zhen Huan}

\address{Zhen Huan, Department of Mathematics,
Sun Yat-sen University, Guangzhou, 510275 China} \curraddr{}
\email{huanzhen84@yahoo.com}
%\thanks{The author was partially supported by NSF grant DMS-1406121.}

%    author two information
%\author{}
%\address{}
%\curraddr{} \email{}
%\thanks{}

%\subjclass[2000]{Primary }
%    The 2010 edition of the Mathematics Subject Classification is
%    now available.  If you are citing a classification from the
%    new scheme, use the following input coding instead.
\subjclass[2010]{Primary 55}

\date{}

\begin{abstract}
In this paper we define a family of theories, quasi-theories, motivated by quasi-elliptic cohomology.
They can be defined from constant loop spaces. With them, the constructions on certain theories can be made in a neat way, such as 
those on generalized Tate K-theories. We set up quasi-theories and discuss their properties.
\end{abstract}

\maketitle

%    Text of article.

\section{Introduction}
In \cite{Rez11} \cite{Huanthesis} \cite{Huansurvey} we set up quasi-elliptic cohomology $QEll^*_G(-)$. It is a variant of Tate K-theory, which is 
the generalized elliptic cohomology theory associated to the Tate curve $Tate(q)$ over Spec$\mathbb{Z}((q))$.  Quasi-elliptic cohomology is defined over Spec$\mathbb{Z}[q^{\pm}]$
and has a direct interpretation in terms of the Katz-Mazur group scheme $T$ [Section 8.7, \cite{KM85}]. Its relation with Tate K-theory is \begin{equation}QEll^*_G(X)\otimes_{\mathbb{Z}[q^{\pm}]}\mathbb{Z}((q))=(K^*_{Tate})_G(X).\end{equation}

We can go a step further and consider the $n-$th generalized Tate K-theory, for each positive integer $n$. It is defined over Spec$\mathbb{Z}((q))^{\otimes n}$ and its divisible group is $\mathbb{G}_m\oplus (\mathbb{Q}/\mathbb{Z})^n$. As shown in Example
\ref{generalizedquasi}, we can extend the idea of quasi-elliptic cohomology and construct a theory $QK_{n, G}^*(-)$ over Spec$\mathbb{Z}[q^{\pm}]^{\otimes n}$. It can be expressed 
in terms of equivariant K-theories. In addition, $$QK_{n, G}^*(X)\otimes_{\mathbb{Z}[q^{\pm}]^{\otimes n}}\mathbb{Z}((q))^{\otimes n}$$ is isomorphic to the $n-$th generalized Tate K-theory $(K^*_{n, Tate})_G(X)$.

As shown in Definition 2.14 and Definition 3.11 \cite{Huansurvey}, for each compact Lie group $G$ and $G-$space $X$, 
we can construct a groupoid $\Lambda(X/\!\!/G)$ consisting of constant loops in the free loop space $LX$. From it we define quasi-elliptic cohomology by 
\[QEll^*_G(X):=K^*_{orb}(\Lambda(X/\!\!/G)).\] In Definition \ref{orbifoldloopspace} we discuss the composition $\Lambda^n(-)$ of this constant loop functor. It defines
$$QK_{n, G}^*(X):=K^*_{orb}(\Lambda^n(X/\!\!/G)).$$ As shown in Section \ref{properties}, those constructions on quasi-elliptic cohomology can all be applied to $QK_{n, G}^*(-)$,
including restriction maps, K\"{u}nneth map, change-of-group isomorphism, etc.
In a further paper we will also construct the power operation of this theory and classify the level structure and finite subgroups of the corresponding divisible group. We did that work for quasi-elliptic cohomology theory in \cite{HuanPower}.

More generally, with the same loop space $\Lambda^n(X/\!\!/G)$, we  define a family of cohomology theories $QE_{n}^*(-)$ with the K-theory $K^*$ replaced by any other equivariant theory $E^*$. They are given the name quasi-theories.

We show in a coming paper \cite{Huanglobal} that if $E^*$ can be globalized, then $QE_n^*$ can be globalized. 

In Section \ref{loopmodel}, 
we study the composition of the free loop functor and that of the loop functors from which we construct quasi-elliptic cohomology in \cite{Huansurvey}. Especially, we give a loop space construction  of $G-$equivariant generalized Tate K-theory for compact Lie groups $G$.
In Section \ref{definequasi}, we define the quasi-theories and discuss some examples. In Section \ref{properties}, we present some properties of the quasi-theories.

\subsection{Acknowledgement}
First I would like to thank my PhD advisor Charles Rezk. He suggested this research project when I was a PhD student at UIUC. I would like to thank Marc Levine. His questions on quasi-elliptic cohomology during my visit at Universit$\ddot{a}$t Duisburg-Essen helped me understand the subject and its nature more deeply. I would like to thank Chenchang Zhu for showing me the relation between bibundles and stacks during her talk at Sun Yat-sen University and during my visit at Georg-August-Universit$\ddot{a}$t G$\ddot{o}$ttingen. And I would like to thank Nathaniel Stapleton for answering my questions nicely all the time.

\section{Loop space models}\label{loopmodel}

\subsection{The $n-$th loop space via bibundles}

In \cite{LerStack} Lerman discussed the construction of bibundles and showed the weak 2-category of Lie groupoids with 1-arrows bibundles can be embeded into 
the 2-category of stacks, which is a sub 2-category of the category of categories.
In \cite{Huansurvey} we construct a loop space via bibundles. In this paper we study the $n-$th power of it.
First we recall the definition of bibundles, which is Definition 3.25 \cite{LerStack}.
\begin{definition}[Bibundles]
Let $G$ and $H$ be two groupoids. A bibundle from $G$ to $H$ is a manifold $P$ together with two maps 
\[\xymatrix{G_0 &P\ar[l]_{a_L}\ar[r]^{a_R} &H_0}\] such that
\begin{itemize}
\item there is a left action of $G$ on $P$ with respect to $a_L$ and a right action of $H$ on $P$ with respect to $a_R$;

\item $a_L: P\longrightarrow G_0$ is a principal $H-$bundle;

\item $a_R$ is $G-$invariant: $a_R(g\cdot P)=a_R(p)$ for all $(g, p)\in G_1\times_{H_0}P$;

\item the actions of $G$ and $H$ commute.

\end{itemize}

\end{definition}
\begin{definition}[Bibundle maps]
A bibundle map is a map $P\longrightarrow P'$ over
$\mathbb{H}_0\times \mathbb{G}_0$ which commutes with the
$\mathbb{G}-$ and $\mathbb{H}-$actions. \end{definition}

For each pair of Lie groupoids $\mathbb{H}$ and $\mathbb{G}$,  we
have a category $Bibun(\mathbb{H}, \mathbb{G})$ with as objects
bibundles from $\mathbb{H}$ to $\mathbb{G}$ and as morphisms the
bundle maps. 
%The category of smooth functors from $\mathbb{H}$ to $\mathbb{G}$ is a subcategory of $Bibun(\mathbb{H}, \mathbb{G})$.

\begin{example}[$Bibun(S^1/\!\!/\ast, \ast/\!\!/G)$] A bibundle from $S^1/\!\!/\ast$
to $\ast/\!\!/G$ with $G$ a Lie group is a smooth manifold $P$
with $a_L: P\longrightarrow S^1$ a smooth
principal $G-$bundle and  $a_R: P\longrightarrow
\ast$ a constant map.  Thus,  a bibundle in this case is equivalent to a smooth
principal $G-$bundle over $S^1$. The morphisms in
$Bibun(S^1/\!\!/\ast, \ast/\!\!/G)$ are bundle isomorphisms.
\label{babyloop1}
\end{example}

In Definition 2.4 \cite{Huansurvey} we define a loop space via bibundles.
\begin{definition}[$Loop_1(X/\!\!/G)$] 
Let $G$ be a Lie group acting smoothly on a manifold $X$. The loop space $Loop_1(X/\!\!/G)$
is defined to be  the groupoid  $Bibun(S^1/\!\!/\ast,
X/\!\!/G)$.
\end{definition}

Only the $G-$action on $X$ is considered in $Loop_1(X/\!\!/G)$. We
add the rotations by adding more morphisms into $Loop_1(X/\!\!/G)$.

\begin{definition}[$Loop^{ext}_1(X/\!\!/G)$]\label{loopext3space} 
The loop space $Loop^{ext}_1(X/\!\!/G)$  is defined to be  the groupoid with the same
objects as $Loop_1(X/\!\!/G)$. Each morphism
consists of the pair $(t, \alpha)$ where $t\in\mathbb{T}$ is a
rotation and $\alpha$ is a morphism in $Loop_1(X/\!\!/G)$. They make the diagram
below commute.
$$\xymatrix{S^1\ar[d]_{t}
&P \ar[l]_{\pi}\ar[d]_{\alpha}\ar[r]^{f} & X \\S^1 &P'
\ar[l]^{\pi'}\ar[ru]_{f'} &}$$
\end{definition}

In this paper we study the $n-$th power of the functor $Loop_1(-)$ and $Loop^{ext}_1(-)$. In Definition 4.1 and 4.2 \cite{Huansurvey}
we show both of them can be defined for groupoids other than translation groupoids. So  $Loop^n_1(-)$ and $Loop^{ext, n}_1(-)$  are both well-defined.
\begin{definition}[$Loop^n_1(-)$ and $Loop^{ext, n}_1(-)$] 
The $n-$th power $Loop^n_1(-)$  is defined to be the composition of n $Bibun(S^1/\!\!/\ast, -)$ 
\[Bibun(S^1/\!\!/\ast, Bibun(S^1/\!\!/\ast, \cdots Bibun(S^1/\!\!/\ast, -))).\]

The $n-$th power $Loop^{ext, n}_1(-)$  is defined to be the composition of n $Loop^{ext}_1(-)$
\[Loop^{ext}_1(Loop^{ext}_1(\cdots Loop^{ext}_1(-))).\]

\end{definition}
\subsection{The $n-$th power of the free loop functor}

Before we introduce the orbifold loop space, we study first the $n-$th power $L^n(-)$ of the free loop functor $L(-)$.
For each $G-$space $X$, $L^nX$ is equipped with the action by $Aut(\mathbb{T}^n)$ and that by $L^nG$.

Let $G$ be a compact Lie group. Recall the free loop space of any space $X$
\begin{equation}LX:=\mathbb{C}^{\infty}(S^1, X). \end{equation} It comes with the action by the circle group $\mathbb{T}$. When $X$ is a $G-$space,
it has the action by the loop group $LG$.
The $n-$th power of the free loop functor
\begin{equation}L^nX=\mathbb{C}^{\infty}((S^1)^{\times n}, X)\end{equation} comes with an evident action by the torus group
$\mathbb{T}^n=(\mathbb{R}/\mathbb{Z})^{\times n}$ defined by %rotating the circle
\begin{equation}t\cdot \gamma:= (s\mapsto \gamma (s+t)), \mbox{
}t\in \mathbb{T}^n, \mbox{   } \gamma\in L^nX.\end{equation}
When $X$ is a $G-$space, $L^nX$  is  equipped with an action
by $L^nG$   \begin{equation}\delta\cdot
\gamma:=(s\mapsto \delta(s)\cdot \gamma(s)),\mbox{ for any } s\in
(S^1)^{\times n}, \mbox{   }\delta\in L^nX, \mbox{ }\mbox{
    }\gamma\in L^nG.\end{equation}

Combining the action by group of automorphisms $Aut((S^1)^{\times n})$ on the
torus and the action by $L^nG$, we get an action by the extended
$n-$th loop group $\Lambda^n G$ on $LX$. $\Lambda^n G:=L^nG\rtimes\mathbb{T}^n$
is a subgroup of
\begin{equation} L^nG\rtimes Aut((S^1)^{\times n}), \mbox{             }(\gamma, \phi)\cdot (\gamma', \phi'):= (s\mapsto \gamma(s) \gamma'(\phi^{-1}(s)), \phi\circ\phi')\end{equation}
with $\mathbb{T}^n$ identified with the group of rotations in $Aut(S^1)^{\times n}\leqslant Aut((S^1)^{\times n})$.
$\Lambda^n G$ acts on $L^nX$ by
\begin{equation}\delta \cdot(\gamma, \phi):= (t\mapsto
\delta(\phi(t))\cdot\gamma(\phi(t))), \mbox{  for any }(\gamma,
\phi)\in \Lambda^n G,\mbox { and    }\delta \in
L^nX.\label{loop2action}\end{equation} It's straightforward to check
(\ref{loop2action}) is a well-defined group action.

Then we define the corresponding twisted loop group. Let $$G^n_f$$ denote the set consisting of elements of the form $\sigma=(\sigma_1, \sigma_2, \cdots \sigma_n)$ where each $\sigma_i:\mathbb{R}^{n}\longrightarrow G$  is  a continuous map satisfying $\sigma_i(s_1, \cdots, s_{i-1}, -, s_{i+1},\cdots, s_n)$ is a constant function when $s_1, \cdots, s_{i-1},  s_{i+1},\cdots, s_n$ are fixed. There is a group structure on $G^n_f$ defined by \begin{equation}(\sigma'\cdot \sigma)(s):= (\sigma_1'(s)\sigma_1(s), \cdots \sigma_n'(s)\sigma_n(s)).\end{equation} The identity element is the $\sigma$ with each $\sigma_i$ a constant map to the identity element of $G$.

For each $\sigma\in G^n_f$, %and $l$ denote the order of $g$.
define the twisted loop group $L_{\sigma}G$ to be the group
\begin{equation}
\{\gamma: \mathbb{R}\longrightarrow
G|\gamma(s+e_i)=\sigma_i^{-1}(s)\gamma(s)\sigma_i(s)\mbox{,   for }i=1, \cdots n\}
\end{equation} where $e_i=(0,\cdots 0,1, 0,\cdots
0)\in \mathbb{R}^n$.
The multiplication of it is defined by
\begin{equation}(\delta\cdot\delta')(s)=\delta(s)\delta'(s)\mbox{,     for      any     }\delta, \delta'\in
L_{\sigma}G,  \mbox{    and     }s\in\mathbb{R}^{\times n}.\end{equation} The
identity element $e$ is the constant map sending $\mathbb{R}^{\times n}$ to the identity element of $G$. Similar to $\Lambda^n G$, we
can define $L_{\sigma}G\rtimes\mathbb{T}^n$ whose multiplication is
defined by
\begin{equation} (\gamma, t)\cdot (\gamma', t'):= (s\mapsto \gamma(s) \gamma'(s+t), t+t').\label{lkgtmulti}\end{equation}

In the case when each $\sigma_i: \mathbb{R}^{\times n}\longrightarrow G$ in $\sigma$ is a constant map, the set of constant maps $\mathbb{R}^{\times n}\longrightarrow G$ in
$L_{\sigma}G$ is a subgroup of it, i.e. the intersection of the centralizers $$\bigcap_{i=1}^n C_G(\sigma_i).$$

\subsection{Orbifold loop space}
In this section we discuss another model of loop space.

Let $G$ be a compact Lie group and $X$ be a $G-$space.

Recall in Definition 2.8 \cite{Huansurvey}, we defined the groupoid $Loop_2(X/\!\!/G)$.
\begin{definition}[$Loop_2(X/\!\!/G)$]\label{oldloopspace3}
Let
$Loop_2(X/\!\!/G)$ denote the groupoid whose objects are $(\sigma_1,
\gamma)$ with $\sigma_1\in G$ and $\gamma: \mathbb{R}\longrightarrow
X$
a continuous map such that $\gamma(s+1)= \gamma(s)\cdot\sigma_1$, for any $s\in\mathbb{R}$. %$\mbox{Map}([0, 1], X)$
A morphism $\alpha: (\sigma_1, \gamma)\longrightarrow (\sigma'_1,
\gamma')$ is a continuous map $\alpha: \mathbb{R}\longrightarrow
G$ satisfying $\gamma'(s)= \gamma(s)\alpha(s)$.
%Note that $\alpha(s)\sigma'=\sigma\alpha(s+1)$, for any $s\in\mathbb{R}$.
\end{definition}

The $n-$th power of the loop functor $Loop_2^n(-)$ is well-defined because the definition of $Loop_2(-)$ can be extended to any  groupoid.
We describe it explicitly in the example below.
\begin{example}[$Loop_2^n(X/\!\!/G)$]
The $n-$th power $Loop_2^n(X/\!\!/G)$ is the groupoid with objects $(\sigma, \gamma)$
where $\sigma=(\sigma_1, \sigma_2, \cdots \sigma_n)\in G^n_f$ and $\gamma: \mathbb{R}^n\longrightarrow X$ is a continuous map
such that $\gamma(s_1, \cdots, s_{i}+1, \cdots, s_n)=\gamma(s_1, \cdots, s_{i}, \cdots, s_n)\cdot \sigma_i(s_1, \cdots, s_n)$, for each $i=1, 2, \cdots n$ and $s_1, s_2 \cdots s_n\in \mathbb{R}$.
A morphism $\beta: (\sigma, \gamma)\longrightarrow (\sigma', \gamma')$ is a continuous map $\beta: \mathbb{R}^n\longrightarrow G$ satisfying $\gamma'(s)= \gamma(s)\alpha(s)$.
\label{loop2n}\end{example}

We also recall the extended loop space $Loop^{ext}_2(X/\!\!/G)$ defined in Definition 2.9 \cite{Huansurvey}.
\begin{definition}[$Loop^{ext}_2(X/\!\!/G)$]\label{loopext3space}

Let $Loop^{ext}_2(X/\!\!/G)$ denote the groupoid with the same objects
as $Loop_2(X/\!\!/G)$. A morphism $$(\sigma, \gamma)\longrightarrow
(\sigma', \gamma')$$ consists of the pair $(\alpha, t)$ with
$\alpha:\mathbb{R}\longrightarrow G$ a continuous map and
$t\in\mathbb{R}$ a rotation on $S^1$ satisfying
$\gamma'(s)=\gamma(s-t)\alpha(s-t)$. \end{definition}

We describe the $n-$th power of the functor $Loop^{ext}_2(-)$ explicitly in Example \ref{loop2next}.
\begin{example}[$Loop_2^{ext, n}(X/\!\!/G)$] The groupoid $Loop_2^{ext, n}(X/\!\!/G)$
has the same objects as $Loop_2^{n}(X/\!\!/G)$. A morphism $$(\sigma, \gamma)\longrightarrow
(\sigma', \gamma')$$ consists of the pair $(\beta, t)$ with $\beta: \mathbb{R}^n\longrightarrow G$
a continuous map and $t\in\mathbb{R}^n$ a rotation on $(S^1)^{\times n}$ satisfying
$\gamma'(s)=\gamma(s-t)\beta(s-t)$.
\label{loop2next}\end{example}

We formulate a skeleton for $Loop^{ext, n}_{2}(X/\!\!/G)$  below. The objects of $Loop^{ext, n}_{2}(X/\!\!/G)$ can be identified
with the space
$$\coprod\limits_{\sigma\in G^{n}_f}\mathcal{L}{_\sigma} X$$ where
$$\mathcal{L}{_\sigma} X=\{\gamma\in L^nX | \gamma(s+e_i)=\gamma(s)\sigma_i(s),\mbox{  for  each  }i=1, \cdots n \mbox{  and  }
s\in \mathbb{R}^n\}.$$

The groupoid $\mathcal{L}{_\sigma}
X/\!\!/ L_{\sigma}G$ is a full subgroupoid of $Loop_{2}^n(X/\!\!/G)$.

Then we study the morphisms in $Loop^{ext}_{n, 2}(X/\!\!/G)$.
The group $L_{\sigma}G\rtimes\mathbb{T}^{n}$ is isomorphic to
\begin{equation}
L_{\sigma} G\rtimes \mathbb{R}^n/\langle
(\sigma_1, -e_1), (\sigma_2, -e_2), \cdots (\sigma_n, -e_n)\rangle.\label{bengkui}
\end{equation}
$L_{\sigma}G\rtimes\mathbb{T}^{n}$ acts on
$\mathcal{L}{_\sigma} X$ by
\begin{equation}\delta \cdot(\gamma, t):= (s\mapsto
\delta(s+t)\cdot\gamma(s+t)), \mbox{  for any }(\gamma, t)\in
L_{\sigma}G\rtimes\mathbb{T}^{n},\mbox { and    }\delta \in
\mathcal{L}{_\sigma} X.\label{chongaction}\end{equation}
The action by $\sigma_i$ on $\mathcal{L}{_\sigma} X$ coincides with that
by $e_i\in\mathbb{R}^{n}$.

By similar discussion to Proposition 2.11 \cite{Huansurvey}, we have the skeleton of $Loop_2^{n}(X/\!\!/G)$ and that of $Loop_2^{ext, n}(X/\!\!/G)$
introduced below.
\begin{proposition}
(i) The groupoid
$$\coprod_{[\sigma]}\mathcal{L}{_\sigma}
X/\!\!/L_{\sigma}G$$ is a skeleton of
$Loop^{n}_2(X/\!\!/G)$, where the coproduct goes over conjugacy
classes in $\pi_0(G^{n}_f)$.

(ii) The groupoid
$$\mathcal{L}(X/\!\!/G):=\coprod_{[\sigma]}\mathcal{L}{_\sigma}
X/\!\!/L_{\sigma}G\rtimes(\mathbb{T}^{\times n})$$ is a skeleton of
$Loop^{ext, n}_2(X/\!\!/G)$, where the coproduct goes over conjugacy
classes in $\pi_0(G^{n}_f)$.   \label{loop1equivske}
\end{proposition}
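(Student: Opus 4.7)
The plan is to verify the defining properties of a skeleton for both parts, following the template of Proposition~2.11 of \cite{Huansurvey} and promoting each argument from $\mathbb{R}$ to $\mathbb{R}^n$. The discussion immediately preceding the statement has already established that $\mathcal{L}_\sigma X /\!\!/ L_\sigma G$ is a full subgroupoid of $Loop_2^n(X/\!\!/G)$ (and similarly that the extended version is a full subgroupoid of $Loop_2^{ext,n}(X/\!\!/G)$), so what remains is to show that the coproduct indexed by conjugacy classes $[\sigma]\in\pi_0(G^n_f)$ contains exactly one object from each isomorphism class of the ambient groupoid.

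For essential surjectivity, given an object $(\tau,\gamma)$ of $Loop_2^n(X/\!\!/G)$, I let $\sigma$ denote the chosen representative of the class $[\tau]$. If $\sigma=g^{-1}\tau g$ coordinate-wise, then the constant map $\beta(s)=g$ together with $\gamma':=\gamma\cdot g$ gives an explicit isomorphism $(\tau,\gamma)\to(\sigma,\gamma')$; when the conjugacy only holds at the level of $\pi_0(G^n_f)$, I correct $\beta$ by composing with a continuous path in $G$ realizing a path from $g^{-1}\tau g$ to $\sigma$ inside $G^n_f$. For injectivity on isomorphism classes, any morphism $\beta:(\sigma,\gamma)\to(\sigma',\gamma')$ must satisfy $\sigma_i(s)\beta(s+e_i)=\beta(s)\sigma'_i(s)$ for each $i$; evaluating at $s=0$ and using the continuity of $\beta$ to identify $\beta(e_i)$ and $\beta(0)$ in $\pi_0(G)$ yields $[\sigma'_i]=[\beta(0)]^{-1}[\sigma_i][\beta(0)]$ in $\pi_0(G)$. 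Combined across all $i$ this shows $[\sigma]=[\sigma']$ as $\pi_0(G)$-conjugacy classes in $\pi_0(G^n_f)$, so distinct chosen representatives produce non-isomorphic objects, proving (i).

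Part (ii) runs in parallel, now allowing an extra rotation $t\in\mathbb{T}^n$ in each morphism as in Example~\ref{loop2next}. The automorphism group of a point in $\mathcal{L}_\sigma X$ is promoted from $L_\sigma G$ to $L_\sigma G\rtimes\mathbb{T}^n$ with multiplication~(\ref{lkgtmulti}) and action~(\ref{chongaction}); the analysis of isomorphism classes of labels is unaffected since rotations do not touch the first coordinate $\sigma$. The most delicate step I anticipate is reconciling the presentation~(\ref{bengkui}) with this action on $\mathcal{L}_\sigma X$: one must check that quotienting $L_\sigma G\rtimes\mathbb{R}^n$ by $\langle(\sigma_i,-e_i)\rangle$ really yields the stated semidirect product acting well-definedly on $\mathcal{L}_\sigma X$, which is exactly where the identification (flagged immediately above the statement) between translation by $e_i$ and the action of $\sigma_i$ enters essentially.
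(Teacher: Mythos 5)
Your argument is correct and is essentially the route the paper takes: the paper offers no proof of this proposition beyond the phrase ``by similar discussion to Proposition 2.11 \cite{Huansurvey}'', and your proposal is exactly that $n=1$ argument promoted to $\mathbb{R}^n$ (fullness from the preceding discussion, essential surjectivity by conjugating/homotoping the label to the chosen representative, and invariance of the $\pi_0$-conjugacy class of $\sigma$ under morphisms). The one step you wave at that genuinely needs writing out is the essential-surjectivity correction when the conjugation only holds in $\pi_0(G^n_f)$: there you must solve the $n$ recursions $\beta(s+e_i)=\tau_i(s)^{-1}\beta(s)\sigma_i(s)$ simultaneously by extending $\beta$ from a fundamental cube, and verify that the extensions in the $n$ directions commute, which follows from the cocycle identities $\sigma_i(s)\sigma_j(s+e_i)=\sigma_j(s)\sigma_i(s+e_j)$ implicit in the definition of an object of $Loop_2^n(X/\!\!/G)$.
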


Via a simple case with $n=2$, we start the discussion on the relation between $Loop^{ext, n}_1(X/\!\!/G)$ and $Loop^{ext, n}_2(X/\!\!/G)$.
\begin{example}[$Loop^{ext, 2}_1(X/\!\!/G)$]By the definition of $Loop^{ext, 2}_1(X/\!\!/G)$, an object in $Loop^{ext, n}_1(X/\!\!/G)$ consists of $Loop^{ext}_1(X/\!\!/G)-$principal bundle $P$, which is determined by a morphism $\sigma$ in $Loop^{ext}_1(X/\!\!/G)$, and a $Loop^{ext}_1(X/\!\!/G)-$map $f: P\longrightarrow (Loop^{ext}_1(X/\!\!/G))_0$, which is determined by a section $\gamma: \mathbb{R}\longrightarrow (Loop^{ext}_1(X/\!\!/G))_0$ with $\sigma=\gamma(t)^{-1} \gamma(t+1)$. Since $f$ is continuous, its image is contained in a single component of $(Loop^{ext}_1(X/\!\!/G))_0$, i.e. the subspace consisting of those elements $$\begin{CD} S^1 @<{a_L}<< P @>{a_R}>> X\end{CD}$$ with $a_L$ the principal $G-$bundle
corresponding to a fixed element $g\in G$. Thus, $\gamma$ can be viewed as a continuous map $\mathbb{R}^2\longrightarrow X$ satisfying $\gamma(t_1+1, t_2)=\gamma(t_1, t_2)\sigma(t_2)$ and $\gamma(t_1, t_2+1)=\gamma(t_1, t_2)\delta(t_1)$ for some $\delta: \mathbb{R}\longrightarrow (Loop^{ext}_1(X/\!\!/G))_1$. 

By the discusion above and Example \ref{loop2next}, we can formulate a fully faithful functor from $Loop^{ext, 2}_1(X/\!\!/G)$ to $Loop^{ext, 2}_2(X/\!\!/G)$.
\end{example}

By induction we have the conclusion below.
\begin{lemma}The groupoid $Loop^{ext, n}_1(X/\!\!/G)$ is isomorphic to a full subgroupoid of $Loop^{ext, n}_2(X/\!\!/G)$. \label{loop12}\end{lemma}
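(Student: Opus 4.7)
The plan is to induct on $n$, with the example just worked out serving both as illustration and as the base case. For $n = 1$ the statement is an isomorphism, and for $n = 2$ the example already constructs the fully faithful embedding explicitly. For the inductive step, suppose $F_{n-1}: Loop^{ext, n-1}_1(X/\!\!/G) \hookrightarrow Loop^{ext, n-1}_2(X/\!\!/G)$ realizes the smaller groupoid as a full subgroupoid. Write $\mathcal{G} := Loop^{ext, n-1}_1(X/\!\!/G)$, so that $Loop^{ext, n}_1(X/\!\!/G) = Loop^{ext}_1(\mathcal{G})$ by definition.

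I would then unpack an object of $Loop^{ext}_1(\mathcal{G})$ exactly as in the $n=2$ example: such an object is a bibundle from $S^1/\!\!/\ast$ to $\mathcal{G}$, determined by a monodromy morphism $m$ in $\mathcal{G}$ together with a $\mathcal{G}$-equivariant section $\gamma : \mathbb{R} \to \mathcal{G}_0$ satisfying $m = \gamma(t)^{-1}\gamma(t+1)$. By continuity the image of $\gamma$ lies in a single connected component of $\mathcal{G}_0$, so applying $F_{n-1}$ objectwise sends each $\gamma(t)$ to a tuple $(\sigma_1, \ldots, \sigma_{n-1}; \gamma_t)$ in which the class $[\sigma_1, \ldots, \sigma_{n-1}]$ is independent of $t$. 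Assembling the $\gamma_t$ into a single continuous map $\gamma : \mathbb{R}^n \to X$ and reading off $m$ as a continuous map $\sigma_n : \mathbb{R}^{n-1} \to G$ via the inductive description of morphisms in $Loop^{ext, n-1}_2$, one obtains data $(\sigma, \gamma)$ with $\sigma = (\sigma_1, \ldots, \sigma_n) \in G^n_f$ and $\gamma : \mathbb{R}^n \to X$ quasi-periodic in every coordinate, matching Example \ref{loop2n}. A parallel unpacking of morphisms $(t, \alpha)$ in $Loop^{ext}_1(\mathcal{G})$ produces pairs $(\beta, t) \in \mathbb{R}^n \to G$ times $\mathbb{R}^n$ of the shape of Example \ref{loop2next}.

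Full faithfulness of the resulting functor $F_n$ then reduces to that of $F_{n-1}$ together with the standard fact that a bibundle and a bibundle morphism are reconstructed from their monodromy and section data; fullness of the image as a subgroupoid follows because every morphism between two objects in the image is built from $\mathcal{G}$-valued data that already lies in the image of $F_{n-1}$ on morphism sets. Injectivity on objects (up to the skeletal description in Proposition \ref{loop1equivske}) falls out of the same reconstruction.

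The main obstacle I anticipate is the bookkeeping needed to verify that the tuple $(\sigma_1, \ldots, \sigma_{n-1})$ extracted at each $t$ in the new coordinate is genuinely constant and that $\sigma_n$ depends only on the coordinates complementary to the $n$-th in the way demanded by the definition of $G^n_f$. This compatibility must be read off from the bibundle axioms --- specifically the principality of $a_L$ and the commutation of the left and right actions --- and checking that it yields precisely the full set of quasi-periodicity relations defining $Loop^{ext, n}_2(X/\!\!/G)$ is the step requiring the most care.
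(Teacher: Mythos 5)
Your proposal is correct and follows essentially the same route as the paper: the paper's own argument consists precisely of the explicit unpacking of bibundle data in the $n=2$ example (monodromy morphism plus continuous section landing in a single component, reassembled into the quasi-periodic data of Examples \ref{loop2n} and \ref{loop2next}) followed by the one-line assertion ``by induction we have the conclusion.'' Your write-up in fact supplies more of the inductive bookkeeping than the paper does.
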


Then we consider a subgroupoid of $Loop^{ext, n}_{2}(X/\!\!/G)$, which is also a subgroupoid of $Loop^{ext, n}_1(X/\!\!/G)$.
Let $G^{tors}$ denote the set of torsion elements in $G$. Let $G^n_t$ denote the subset
$$\{\sigma=(\sigma_1, \cdots \sigma_n )\in G^{n}_f| \mbox{   Each  }\sigma_i\mbox{  is contant with image in  }G^{tors}\mbox{;   } [\sigma_i, \sigma_j]\mbox{   is  the   identity  element in  }G\}$$ of $G^n_f$.

\begin{definition} [$\Lambda^n(X/\!\!/G)$] \label{orbifoldloopspace}
Let $\Lambda^n(X/\!\!/G)$  denote the groupoid with the objects
$$\coprod\limits_{\sigma\in G^{n}_{t}}X^{\sigma},$$ and with
morphisms the space $\coprod\limits_{\sigma, \sigma'\in
G^{n}_t}\Lambda_G(\sigma,\sigma')\times X^\sigma$. Below is a little explanation.
The space $X^{\sigma}$ is the intersection $$\bigcap_{i=1}^n X^{\sigma_i}.$$ Each element $x\in X^{\sigma}$ can be viewed as a constant loop in
$\mathcal{L}{_\sigma}
X$, i.e. the image of the loop consists of the single point $x$.

Let $C_G(\sigma, \sigma')$ denote the set $$\{g\in G| \sigma_ig=g\sigma_i', \mbox{  for  each  } i=1, 2, \cdots n.\}.$$
Let $\Lambda_G(\sigma, \sigma')$ denote
the quotient of $C_G(\sigma, \sigma')\times \mathbb{R}^n$ under
the equivalence $$(x, t_1, \cdots t_i, \cdots t_n)\sim (\sigma_ix, t_1, \cdots t_i-1, \cdots t_n)=(x\sigma_i',
t_1, \cdots t_i-1, \cdots t_n)$$ for each $i$.

For $x\in X^{\sigma}$, $[a, t]\in\Lambda_G(\sigma,
\sigma')$,
\begin{equation}x\cdot ([a, t], x) := x\cdot a \in X^{\sigma'}.\label{actlt}\end{equation}

\label{olpwelldefined}\end{definition}

\begin{remark}Note that the functor $\Lambda^n(-)$ defined on global quotients is indeed the $n-$th power of the functor $\Lambda(-)$. The functor $\Lambda(-)$ is defined in Definition 2.14 \cite{Huansurvey}, from which quasi-elliptic cohomology is constructed. As shown in Definition 4.4 \cite{Huansurvey}, $\Lambda(-)$
can be defined for any orbifold groupoid. So $\lambda^n(-)$ is well-defined.
\end{remark}

%\subsection{Ghost loop}

\section{The Quasi-theory $QE_{n, G}^*(-)$}\label{definequasi}

In this section we define the quasi-theories.

Let $G$ be a compact Lie group and $n$ denote a positive integer. Let $G^{tors}_{conj}$ denote a set of representatives of
$G-$conjugacy classes in $G^{tors}$. Let $G^{n}_{z}$ denote set
$$\{\sigma=(\sigma_1, \sigma_2, \cdots \sigma_n )| \sigma_i\in G^{tors}_{conj}, [\sigma_i, \sigma_j]\mbox{   is  the   identity  element in  }G\}.$$

Let $\sigma=(\sigma_1, \sigma_2, \cdots \sigma_n)\in G^n_z$.  Define
\begin{align}C_G(\sigma)&:=C_G(\sigma, \sigma); \label{Csigmadef}\\ \Lambda_G(\sigma)&:= C_G(\sigma)\times
\mathbb{R}^n/\langle (\sigma_1, -e_1), (\sigma_2, -e_2), \cdots (\sigma_n, -e_n)\rangle=\Lambda_G(\sigma, \sigma).\label{lambdadef}\end{align}

Let $q:\mathbb{T}\longrightarrow U(1)$ denote the representation $t\mapsto e^{2\pi i t}$. Let $q_i=1\otimes\cdots\otimes q\otimes\cdots\otimes 1: \mathbb{T}^{n}\longrightarrow U(1)$ denote the tensor product with $q$ at the $i-$th position and trivial representations at other position. The representation ring $$R(\mathbb{T}^{n})\cong R(\mathbb{T})^{\otimes n}=\mathbb{Z}[q_1^{\pm}, \cdots q_n^{\pm}].$$

We have the exact sequence  \begin{equation}1\longrightarrow C_G(\sigma)\longrightarrow \Lambda_G(\sigma)\buildrel{\pi}\over\longrightarrow \mathbb{T}^{n}\longrightarrow 0 \end{equation}
where the first map is $g\mapsto [g, 0]$  and the second map is $\pi([g, t_1, \cdots t_n])=(e^{2\pi i t_1}, \cdots e^{2\pi it_n}).$
Then the map $\pi^*: R(\mathbb{T}^{n})\longrightarrow R\Lambda_G(\sigma)$ equips the representation ring $R\Lambda_G(\sigma)$ the structure as an
 $R(\mathbb{T}^{n})-$module.

We have a generalization of Lemma 3.1 \cite{Huansurvey} presenting the relation between $RC_G(\sigma)$ and $R\Lambda_G(\sigma)$.

\begin{lemma} $\pi^*: R(\mathbb{T}^{n})\longrightarrow R\Lambda_G(\sigma)$ exhibits $R\Lambda_G(\sigma)$ as a free $R(\mathbb{T}^{n})-$module.

There is an $R(\mathbb{T}^{n})-$basis of $R\Lambda_G(\sigma)$
given by irreducible representations $\{V_{\lambda}\}$, such that
restriction $V_{\lambda}\mapsto V_{\lambda}|_{C_G(\sigma)}$ to $C_G(\sigma)$
defines a bijection between $\{V_{\lambda}\}$ and the set
$\{\lambda\}$ of irreducible representations of
$C_G(\sigma)$.\label{cl}\end{lemma}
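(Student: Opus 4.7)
The plan is to adapt the proof of Lemma 3.1 in \cite{Huansurvey} from the $n=1$ case essentially verbatim; the only new inputs are that $\sigma$ now has $n$ components and that one invokes Schur separately for each $\sigma_i$. The crucial structural observation is that each $\sigma_i$ lies in the centre of $C_G(\sigma)$: by definition every $g \in C_G(\sigma)$ satisfies $g \sigma_i = \sigma_i g$, and the components $\sigma_j$ also commute among themselves by the defining condition on $G^n_z$. Since each $\sigma_i$ is moreover torsion, any irreducible unitary representation $\lambda$ of $C_G(\sigma)$ must send $\sigma_i$ to a scalar $\zeta_i \cdot I$ with $\zeta_i$ a root of unity.

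Given this, for each irreducible $\lambda$ I would construct an explicit extension $V_\lambda$ to $\Lambda_G(\sigma)$. Write $\zeta_i = e^{2\pi i \alpha_i}$ with $\alpha_i \in \mathbb{Q}$ and define
$$V_\lambda([g, t_1, \ldots, t_n]) := e^{2\pi i (\alpha_1 t_1 + \cdots + \alpha_n t_n)} \lambda(g).$$
The defining relations $(g, t) \sim (g\sigma_i, t - e_i)$ of $\Lambda_G(\sigma)$ are respected precisely because $e^{-2\pi i \alpha_i} \lambda(\sigma_i) = I$, and multiplicativity follows from the direct-product law in $C_G(\sigma) \times \mathbb{R}^n$. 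By construction $V_\lambda|_{C_G(\sigma)} = \lambda$, which already yields the claimed bijection between the prospective basis $\{V_\lambda\}$ and the irreducible representations of $C_G(\sigma)$.

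To see that the $V_\lambda$ span $R\Lambda_G(\sigma)$ freely over $R(\mathbb{T}^n)$, I would apply Clifford theory to the normal inclusion $C_G(\sigma) \subset \Lambda_G(\sigma)$. The conjugation action of $\Lambda_G(\sigma)$ on $C_G(\sigma)$ is trivial: for translation classes $[e, t]$ this is immediate from the direct-product law, and for the classes $[\sigma_i, 0]$ it follows from centrality. Hence for any irreducible $W$ of $\Lambda_G(\sigma)$ the restriction $W|_{C_G(\sigma)}$ is $\lambda$-isotypic for a unique $\lambda$. The Hom-space $U := \operatorname{Hom}_{C_G(\sigma)}(V_\lambda, W)$ carries a natural $\mathbb{T}^n = \Lambda_G(\sigma)/C_G(\sigma)$-action (well-defined because $C_G(\sigma)$ acts trivially on it), the evaluation $V_\lambda \otimes U \to W$ is an isomorphism, and irreducibility of $W$ forces $U$ to be one-dimensional, hence equal to $\pi^*(q_1^{k_1} \cdots q_n^{k_n})$ for a unique multi-index. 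Linear independence is then straightforward: restriction to $C_G(\sigma)$ separates different $\lambda$, and for fixed $\lambda$ the pullbacks of distinct monomials in $R(\mathbb{T}^n)$ give distinct classes. The step requiring most care, as in the $n=1$ case, is the Clifford-theoretic argument — specifically checking that the outer $\mathbb{T}^n$-action on the isomorphism classes of irreducibles of $C_G(\sigma)$ is trivial so that $W|_{C_G(\sigma)}$ is isotypic — but this is an immediate consequence of the centrality of the tuple $\sigma$ inside $C_G(\sigma)$.
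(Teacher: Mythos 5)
Your proposal is correct and follows essentially the same route as the paper: extend each irreducible $\lambda$ of $C_G(\sigma)$ to $\Lambda_G(\sigma)$ using the scalars by which the central torsion elements $\sigma_i$ act (the paper's $\lambda\odot_{\mathbb{C}}(\eta_1\otimes\cdots\otimes\eta_n)$ is exactly your $V_\lambda$ twisted by $\pi^*(q_1^{m_1}\cdots q_n^{m_n})$), then show every irreducible of $\Lambda_G(\sigma)$ arises this way. Your explicit Clifford-theoretic justification of the backward direction is a careful filling-in of the step the paper dismisses as ``straightforward to check,'' not a different method.
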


\begin{proof}
The proof of Lemma \ref{cl} is analogous to that of Lemma 3.1 \cite{Huansurvey}. I sketch it below.

Via each $C_G(\sigma)-$representation $\lambda: C_G(\sigma)\longrightarrow GL(m, \mathbb{C})$ with representation space $V$ and irreducible representations
$\eta_i: \mathbb{R}\longrightarrow GL(m, \mathbb{C})$ of $\mathbb{R}$, $i=1, 2, \cdots n$, such that $\lambda(\sigma_i)$ acts on $V$ via the scalar
multiplication by $\eta_i(1)$, we get an $m-$dimensional $\Lambda_G(\sigma)-$representation $\lambda\odot_{\mathbb{C}}(\eta_1\otimes\cdots\otimes \eta_n)$ with representation space $V$. Let $l_i$ denote the order of the element $\sigma_i$ and $\eta_i(1) =e^{\frac{2\pi i k_i}{l_i}}$ for some $k_i\in\mathbb{Z}$. Since each $\eta_i$ is $1-$dimensional, for any $t\in \mathbb{R}$, $$\eta_i(t)= e^{2\pi i(\frac{k_i}{l_i}+m_i)t}$$ for some integer $m_i$. We have
\begin{equation}\lambda\odot_{\mathbb{C}}(\eta_1\otimes\cdots\otimes \eta_n)([g, t_1,\cdots t_n])=\lambda(g)\eta_1(t_1)\cdots\eta_n(t_n)\end{equation}

In the other direction, given an irreducible $\Lambda_G(\sigma)-$representation $\rho$ with representation space $W$, its restriction to each factor gives an irreducible $C_G(\sigma)-$representation $\lambda$ with underlying space $V$ and an irreducible representation $\eta$ of $\mathbb{R}^n$, thus irreducible representations $\{\eta_i\}_i$ of $\mathbb{R}$. They satisfy $\lambda(\sigma_i)=\eta_i(1)I$.

It is straightforward to check the conclusion is true.
\end{proof}

\begin{definition}For equivariant cohomology theories $\{E_{H}^*\}_H$ and any $G-$space $X$, the corresponding quasi-theory $QE_{n, G}^*(X)$ is defined to be
$$\prod_{\sigma\in
G^{n}_{z}}E^*_{\Lambda_G(\sigma)}(X^{\sigma}).$$\label{qedef}\end{definition}

\begin{remark} If there is an orbifold theory $E_{orb}^*$ satisfying $E_{orb}^*(X/\!\!/G)=E_{G}^*(X)$ for each $G-$space $X$, then Definition \ref{qedef}
can be expressed as  $$QE_{n, G}^*(X)\cong E^*_{orb}(\Lambda^n(X/\!\!/G)),$$
where the groupoid $\Lambda(X/\!\!/G)$ is defined in Definition \ref{olpwelldefined}.\end{remark}

\begin{example}[Motivating example: Tate K-theory  and quasi-elliptic cohomology]
Tate $K-$theory is the generalized elliptic cohomology associated to the Tate curve. Its divisible group is $\mathbb{G}_m\oplus \mathbb{Q}/\mathbb{Z}$.
In \cite{Huansurvey} we introduce quasi-elliptic cohomology $QEll^*_G(-)$. It is exactly the theory $QK_{1, G}^*(-)$ in Definition \ref{qedef}.
We have the relation
\begin{equation}QEll^*_G(X)\otimes_{\mathbb{Z}[q^{\pm}]}\mathbb{Z}((q))=(K^*_{Tate})_G(X)
\label{tateqellequiv}\end{equation}
\end{example}

\begin{example}[Generalized Tate K-theory and generalized quasi-elliptic cohomology]

In Section 2 \cite{Gan07} Ganter  gave an interpretation of $G-$equivariant Tate K-theory for finite groups $G$
by the loop space of a global quotient orbifold. Apply the loop construction $n$ times, we can get the $n-$th generalized Tate K-theory. The divisible group associated to it is
$\mathbb{G}_m\oplus (\mathbb{Q}/\mathbb{Z})^n$.

With quasi-theories, we can get a neat expression of it. Consider the quasi-theory
$$QK_{n, G}^*(X)=\prod_{\sigma\in
G^{n}_{z}}K^*_{\Lambda_G(\sigma)}(X^{\sigma}).$$
$QK_{n, G}^*(X)\otimes_{\mathbb{Z}[q^{\pm}]^{\otimes n}}\mathbb{Z}((q))^{\otimes n}$ is isomorphic to the $n-$th generalized Tate K-theory.

The theories $QK_{n, G}^*(-)$ has all the properties and features that we cited in Section \ref{properties}.
\label{generalizedquasi}
\end{example}

\section{Properties}\label{properties}

In this section we present some properties of the quasi-theories. 

Since each homomorphism $\phi: G\longrightarrow H$ induces a
well-defined homomorphism $\phi_{\Lambda}:
\Lambda_G(\tau)\longrightarrow\Lambda_H(\phi(\tau))$ for each
$\tau$ in $G^{n}_{z}$, we can get the proposition below directly.
\begin{proposition}If $\{E^*_G(-)\}_G$  have restriction maps, then each homomorphism $\phi: G\longrightarrow H$ induces a ring map
$$\phi^*: QE^*_{n, H}(X)\longrightarrow QE^*_{n, G}(\phi^*X)$$ characterized by the commutative diagrams

\begin{equation}\begin{CD}QE^*_{n, H}(X) @>{\phi^*}>> QE^*_{n, G}(\phi^*X) \\ @V{\pi_{\phi(\tau)}}VV  @V{\pi_{\tau}}VV  \\
E^*_{\Lambda_H(\phi(\tau))}(X^{\phi(\tau)}) @>{\phi^*_{\Lambda}}>>
E^*_{\Lambda_G(\tau)}(X^{\phi(\tau)})\end{CD}\end{equation} for any $\tau \in G^{n}_{z}$. So $QE^*_G$ is functorial in $G$.
\label{restrictionq}\end{proposition}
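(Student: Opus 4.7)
The plan is to construct $\phi^*$ factor-by-factor using the induced group maps $\phi_\Lambda$ together with the assumed naturality of restriction on $\{E^*_{-}\}$, and then to assemble these factors into a ring map on the product. The first observation I would verify is the identification
\begin{equation*}
(\phi^*X)^{\tau} \;=\; X^{\phi(\tau)}
\end{equation*}
as subspaces of $X$, for every $\tau \in G^{n}_z$; this is immediate because an element $g \in G$ acts on $\phi^*X$ via $\phi(g)$. Next I would check that $\phi_\Lambda([g,t_1,\dots,t_n]) := [\phi(g), t_1,\dots,t_n]$ is well-defined from $\Lambda_G(\tau)$ to $\Lambda_H(\phi(\tau))$: the condition $\phi(C_G(\tau)) \subseteq C_H(\phi(\tau))$ is clear from $g\tau_i = \tau_i g$, and the relations $(\tau_i, -e_i) \sim e$ in $\Lambda_G(\tau)$ map to $(\phi(\tau_i), -e_i) \sim e$ in $\Lambda_H(\phi(\tau))$, which are precisely the relations imposed in \eqref{lambdadef}. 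Thus $X^{\phi(\tau)}$ acquires a $\Lambda_G(\tau)$-action through $\phi_\Lambda$, making the restriction map $\phi^*_\Lambda : E^*_{\Lambda_H(\phi(\tau))}(X^{\phi(\tau)}) \to E^*_{\Lambda_G(\tau)}(X^{\phi(\tau)})$ meaningful and a ring map by hypothesis.

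Next I would define $\phi^*$ component-wise. For each $\tau \in G^n_z$, the $\tau$-component of $\phi^*$ is the composition
\begin{equation*}
QE^*_{n,H}(X) \xrightarrow{\pi_{\phi(\tau)}} E^*_{\Lambda_H(\phi(\tau))}(X^{\phi(\tau)}) \xrightarrow{\phi^*_\Lambda} E^*_{\Lambda_G(\tau)}(X^{\phi(\tau)}) = E^*_{\Lambda_G(\tau)}((\phi^*X)^{\tau}),
\end{equation*}
and taking the product over $\tau \in G^n_z$ yields the desired ring map $\phi^*$. The defining commutative squares of the proposition hold by construction, and the ring homomorphism property follows because multiplication in $QE^*_{n,G}$ is defined factorwise and each $\phi^*_\Lambda$ is a ring map.

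The only real subtlety, and the place where I expect the bookkeeping to be most delicate, is that $\phi(\tau)$ need not belong to the chosen set of representatives $H^n_z$. One must therefore interpret $\pi_{\phi(\tau)}$ as the composition of the projection onto the unique representative $\sigma' \in H^n_z$ with $\phi(\tau)$ in its $H$-conjugacy class, followed by the canonical conjugation isomorphism $E^*_{\Lambda_H(\sigma')}(X^{\sigma'}) \xrightarrow{\cong} E^*_{\Lambda_H(\phi(\tau))}(X^{\phi(\tau)})$ induced by any conjugating element of $H$. One needs to verify that this isomorphism is independent of the choice of conjugator, which follows from the fact that an inner automorphism by an element of $C_H(\phi(\tau))$ acts trivially on $E^*_{\Lambda_H(\phi(\tau))}(X^{\phi(\tau)})$. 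Once this identification is in place, the whole construction is natural in $\phi$ and functoriality of $QE^*_{n,G}$ in $G$ follows from the corresponding functoriality of $\Lambda_{(-)}(\tau)$ and of each $E^*_{-}$.
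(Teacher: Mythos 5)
Your proposal is correct and follows essentially the same route as the paper, which simply observes that $\phi$ induces well-defined homomorphisms $\phi_\Lambda:\Lambda_G(\tau)\to\Lambda_H(\phi(\tau))$ and assembles the restriction maps componentwise. Your additional care about $\phi(\tau)$ not lying in the chosen set of representatives $H^n_z$, handled via the conjugation isomorphism (well-defined because inner automorphisms act trivially), fills in a bookkeeping detail the paper leaves implicit.
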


Next we show if $\{E_{G}^*\}_G$ has the change-of-group isomorphism, then so does $\{QE^*_{n, G}(-)\}_G$.

Let $H$ be a closed subgroup of $G$ and $X$ a $H$-space. Let
$\phi: H\longrightarrow G$ denote the inclusion homomorphism. The
change-of-group map $\rho^G_H: QE^*_{n, G}(G\times_HX)\longrightarrow
QE^*_{n, H}(X)$ is defined as the composite
\begin{equation}QE^*_{n, G}(G\times_HX)\buildrel{\phi^*}\over\longrightarrow
QE^*_{n, H}(G\times_H X)\buildrel{i^*}\over\longrightarrow
QE_{n, H}^*(X)\label{changeofgroup}
\end{equation}
where $\phi^*$ is the restriction map and $i: X\longrightarrow
G\times_HX$ is the $H-$equivariant map defined by $i(x)=[e, x].$

\begin{proposition}If $\{E_{G}^*\}_G$ has the change-of-group isomorphism, the change-of-group map
$$\rho^G_H: QE^*_{n, G}(G\times_H X)\longrightarrow
QE^*_{n, H}(X)$$ defined in (\ref{changeofgroup}) is an
isomorphism.\end{proposition}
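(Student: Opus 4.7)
The plan is to reduce the statement to the change-of-group isomorphism for $E$, factor by factor, by decomposing the $\sigma$-fixed set $(G\times_H X)^{\sigma}$ as an induced $\Lambda_G(\sigma)$-space. By Definition \ref{qedef},
\[QE^*_{n,G}(G\times_H X) = \prod_{\sigma\in G^n_z} E^*_{\Lambda_G(\sigma)}((G\times_H X)^{\sigma}), \quad QE^*_{n,H}(X) = \prod_{\tau\in H^n_z} E^*_{\Lambda_H(\tau)}(X^{\tau}),\]
and by construction $\rho^G_H = i^*\circ\phi^*$ is assembled componentwise from the restriction of Proposition \ref{restrictionq} and pullback along $i$.

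First I would fix $\sigma\in G^n_z$ and compute $(G\times_H X)^{\sigma}$ directly: a point $[g,x]$ is $\sigma$-fixed if and only if $g^{-1}\sigma_i g\in H$ for each $i$ and the tuple $\sigma^g := (g^{-1}\sigma_1 g,\ldots, g^{-1}\sigma_n g)\in H^n_z$ fixes $x$. Grouping these points by the $H$-conjugacy class of $\sigma^g$, and choosing for each such class a representative $\tau\in H^n_z$ together with an element $g_\tau\in G$ satisfying $\sigma^{g_\tau}=\tau$, the rule $[a,x]\mapsto[ag_\tau^{-1},x]$ yields a $\Lambda_G(\sigma)$-equivariant homeomorphism
\[(G\times_H X)^{\sigma} \;\cong\; \coprod_{[\tau]} \Lambda_G(\sigma)\times_{\Lambda_H(\tau)} X^{\tau},\]
where the coproduct runs over $H$-conjugacy classes $[\tau]$ of tuples in $H^n_z$ that are $G$-conjugate to $\sigma$, and $\Lambda_H(\tau)\hookrightarrow\Lambda_G(\sigma)$ is induced by conjugation by $g_\tau$ on the centralizer factor, together with the identity on the $\mathbb{R}^n$-factor, in the presentation \eqref{lambdadef}.

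I would then invoke the hypothesized change-of-group isomorphism for $E$ on each summand to obtain
\[E^*_{\Lambda_G(\sigma)}\bigl((G\times_H X)^{\sigma}\bigr) \;\cong\; \prod_{[\tau]} E^*_{\Lambda_H(\tau)}(X^{\tau}),\]
and take the product over $\sigma\in G^n_z$. Since every $\tau\in H^n_z$ is $G$-conjugate to a unique $\sigma\in G^n_z$, the double product collapses to a product indexed exactly by $H^n_z$, reassembling $QE^*_{n,H}(X)$. Finally I would check that this composite agrees with $\rho^G_H$ by a componentwise diagram chase: on the $\sigma$-component, $\phi^*$ implements restriction along $\Lambda_H(\tau)\hookrightarrow\Lambda_G(\sigma)$ and $i^*$ implements pullback along $X^{\tau}\hookrightarrow \Lambda_G(\sigma)\times_{\Lambda_H(\tau)} X^{\tau}$, which is exactly the assembly of the classical $E$-theoretic change-of-group map.

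The main obstacle is the equivariant decomposition in the second step: one must verify that conjugation by $g_\tau$ carries each relation $(\tau_i,-e_i)$ to $(\sigma_i,-e_i)$, so it descends to a well-defined embedding $\Lambda_H(\tau)\hookrightarrow\Lambda_G(\sigma)$ compatible with the shared rotation subtorus $\mathbb{T}^n$, and that the induced map from the coproduct is a genuine $\Lambda_G(\sigma)$-equivariant homeomorphism (rather than just a set-theoretic bijection). Once this decomposition and its compatibility with $i$ are in place, the remainder is bookkeeping.
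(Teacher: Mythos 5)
Your proposal is correct and follows essentially the same route as the paper: both decompose $(G\times_H X)^{\sigma}$ as a coproduct of induced spaces $\Lambda_G\times_{\Lambda_H(\tau)}X^{\tau}$ indexed by the $H$-conjugacy classes $\tau$ that are $G$-conjugate to $\sigma$, apply the change-of-group isomorphism of $E$ on each factor, and observe that the double product reindexes to $H^n_z$. The only cosmetic difference is that the paper inducts up to $\Lambda_G(\tau)$ and then transports to $\Lambda_G(\sigma)$ via the conjugation isomorphism $c_{g_\tau}$, whereas you build conjugation by $g_\tau$ directly into the embedding $\Lambda_H(\tau)\hookrightarrow\Lambda_G(\sigma)$; these are equivalent.
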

\begin{proof}

The proof is analogous to that of Proposition 3.19 \cite{Huansurvey}. I sketch
it below.

For any $\tau\in H^{n}_{z}$, there exists a unique
$\sigma_{\tau}\in G_{z}^{n}$ such that
$\tau=g_{\tau}\sigma_{\tau}g_{\tau}^{-1}$ for some $g_{\tau}\in
G$.  Consider the maps \begin{equation}\begin{CD}
\Lambda_G(\tau)\times_{\Lambda_H(\tau)}X^{\tau}@>{[[a, t], x
]\mapsto [a, x]}>> (G\times_H X)^{\tau}@>{[u, x]\mapsto
[g_{\tau}^{-1}u, x]}>>
(G\times_HX)^{\sigma}.\end{CD}\end{equation} The first map is
$\Lambda_G(\tau)-$equivariant and the second is equivariant with
respect to the homomorphism $c_{g_{\tau}}:
\Lambda_{G}(\sigma)\longrightarrow \Lambda_G(\tau)$ sending $[u,
t]\mapsto [g_{\tau} u g_{\tau}^{-1}, t]$. Taking a coproduct over
all the elements $\tau\in H^{n}_{z}$ that are conjugate to
$\sigma\in G^{n}_{z}$ in $G$, we get an isomorphism
$$\gamma_{\sigma}: \coprod_{\tau}\Lambda_G(\tau)\times_{\Lambda_H(\tau)} X^{\tau}\longrightarrow
(G\times_HX)^{\sigma}$$ which is $\Lambda_G(\sigma)-$equivariant
with respect to $c_{g_{\tau}}$. Then we have the map
\begin{equation}\gamma:=\prod_{\sigma\in G^{n}_{z}}\gamma_{\sigma}:
\prod_{\sigma\in G^{n}_{z}}
E^*_{\Lambda_G(\sigma)}(G\times_HX)^{\sigma}\longrightarrow
\prod_{\sigma\in
G^{n}_{z}}E^*_{\Lambda_G(\sigma)}(\coprod_{\tau}\Lambda_G(\tau)\times_{\Lambda_H(\tau)}
X^{\tau})
\end{equation}

It's straightforward to check the change-of-group map coincide
with the composite \begin{align*} QE^*_{n, G}(G\times_H
X)\buildrel{\gamma}\over\longrightarrow \prod_{\sigma\in
G^{n}_{z}}E^*_{\Lambda_G(\sigma)}(\coprod_{\tau}\Lambda_G(\tau)\times_{\Lambda_H(\tau)}
X^{\tau})\longrightarrow &\prod_{\tau\in
H^{n}_{z}}E^*_{\Lambda_H(\tau)}(X^{\tau})\\&=QE^*_{n, H}(X)\end{align*}
with  the second map  the change-of-group isomorphism in
 $\{E^*_{n, G}(-)\}$.
\end{proof}

Next we show that %if $E$ has K\"{u}nneth map, then so do
the theories $QK^*_{n, G}(-)$ have K\"{u}nneth map.

Let $G$ and $H$ be two compact Lie groups. $X$ is a $G$-space and
$Y$ is a $H$-space. Let $\sigma\in G^{n}_z$ and $\tau\in
H^{n}_z$. Let
$\Lambda_G(\sigma)\times_{\mathbb{T}^{n}}\Lambda_H(\tau)$ denote the
fibered product of the morphisms
$$\Lambda_G(\sigma)\buildrel{\pi}\over\longrightarrow
\mathbb{T}^n\buildrel{\pi}\over\longleftarrow\Lambda_H(\tau).$$ It is
isomorphic to $\Lambda_{G\times H}(\sigma, \tau)$ under the
correspondence
$$([\alpha, t], [\beta, t])\mapsto [\alpha, \beta, t].$$

Consider the map below
\begin{align*}T: K_{\Lambda_G(\sigma)}(X^{\sigma})\otimes
K_{\Lambda_H(\tau)}(Y^{\tau})&\longrightarrow
K_{\Lambda_{G}(\sigma)\times\Lambda_{H}(\tau)}(X^{\sigma}\times
Y^{\tau})\buildrel{res}\over\longrightarrow
K_{\Lambda_{G}(\sigma)\times_{\mathbb{T}^n}\Lambda_{H}(\tau)}(X^{\sigma}\times
Y^{\tau})\\ &\buildrel{\cong}\over\longrightarrow
K_{\Lambda_{G\times H}(\sigma, \tau)}((X\times Y)^{(\sigma,
\tau)}).\end{align*} where the first map is the K\"{u}nneth map of
equivariant K-theory, the second is the restriction map  and the
third is the isomorphism induced by the group isomorphism
$\Lambda_{G\times H}(\sigma,
\tau)\cong\Lambda_G(\sigma)\times_{\mathbb{T}^n}\Lambda_H(\tau)$.

For $\sigma\in G^{n}_z$, let $1$ denote
the trivial line bundle over $X^{\sigma}$ and let $q_i$ denote the line
bundle $1\odot q_i$ over $X^{\sigma}$. The map $T$ above sends both
$1\otimes q_i$ and $q_i\otimes 1$  to $q_i$. So we get the well-defined
map
\begin{equation}K^*_{\Lambda_G(\sigma)}(X^{\sigma})\otimes_{\mathbb{Z}[q^{\pm}]^{\otimes n}}K^*_{\Lambda_H(\tau)}(Y^{\tau})\longrightarrow
K_{\Lambda_{G\times H}(\sigma, \tau)}((X\times
Y)^{(\sigma, \tau)}).\label{ku}\end{equation}

\begin{definition}The tensor product of the quasi-theory $\{QK^*_{n, G}(-)\}_G$ is defined by
\begin{equation}QK^*_{n, G}(X)\widehat{\otimes}_{\mathbb{Z}[q^{\pm}]^{\otimes n}}QK^*_{n, H}(Y)
\cong\prod_{\sigma\in G^{n}_{z}\mbox{,   } \tau\in
H^{n}_{z}}K^*_{\Lambda_G(\sigma)}(X^{\sigma})\otimes_{\mathbb{Z}[q^{\pm}]^{\otimes n}}K^*_{\Lambda_H(\tau)}(Y^{\tau}).\label{qectensor}\end{equation}
The direct product of the maps defined in (\ref{ku}) gives a ring
homomorphism
$$QK^*_{n, G}(X)\widehat{\otimes}_{\mathbb{Z}[q^{\pm}]^{\otimes n}}QK^*_{n, H}(Y)\longrightarrow
QK^*_{n, G\times H}(X\times Y),$$ which is the K\"{u}nneth map of the quasi-theory $\{QK^*_{n, G}(-)\}_G$.
\end{definition}

In addition, as quasi-elliptic cohomology, the quasi-theories $\{QK^*_{n, G}(-)\}_G$ inherits the properties from equivariant K-theories.
The proof of them are similar to that of Proposition 3.17, Proposition 3.18 in \cite{Huansurvey}.

By Lemma \ref{cl} we have
$$QK^*_{n, G}(\mbox{pt})\widehat{\otimes}_{\mathbb{Z}[q^{\pm}]^{\otimes n}}QK^*_{n, H}(\mbox{pt})=QK^*_{n, G\times H}(\mbox{pt}).$$

\begin{proposition}
Let $X$ be a $G\times
H-$space with trivial $H-$action and let $\mbox{pt}$ be the single
point space with trivial $H-$action.
Then we have
$$QK^*_{n, G\times H}(X)\cong QK^*_{n, G}(X)\widehat{\otimes}_{\mathbb{Z}[q^{\pm}]^{\otimes n}} QK^*_{n, H}(pt).$$

Especially, if $G$ acts trivially on $X$, we have
$$QK^*_G(X)\cong   QK^*_{n, \{e\}}(X)\widehat{\otimes}_{\mathbb{Z}[q^{\pm}]^{\otimes n}}
QK^*_{n, G}(\mbox{pt}).$$ Here $\{e\}$ is
the trivial group.
\end{proposition}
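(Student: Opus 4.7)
The plan is to unpack both sides using Definition \ref{qedef} and compare them piecewise, indexed by the elements of $(G\times H)^{n}_{z}$. First I would identify $(G\times H)^{n}_{z}$ with $G^{n}_{z}\times H^{n}_{z}$: an element of $(G\times H)^{n}_{z}$ is a tuple $((\sigma_{i},\tau_{i}))_{i=1}^{n}$ of pairwise commuting torsion pairs in $G\times H$, and this is the same data as $\sigma\in G^{n}_{z}$ together with $\tau\in H^{n}_{z}$, since torsion and commutation are componentwise in $G\times H$. Because $H$ acts trivially on $X$, we have $X^{(\sigma,\tau)}=X^{\sigma}$. The group isomorphism $\Lambda_{G\times H}(\sigma,\tau)\cong\Lambda_{G}(\sigma)\times_{\mathbb{T}^{n}}\Lambda_{H}(\tau)$ is already recorded in the discussion preceding the K\"unneth map, and the action on $X^{\sigma}=X^{(\sigma,\tau)}$ evidently factors through the projection to $\Lambda_{G}(\sigma)$.

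With these identifications in place, the claim reduces to establishing, for each pair $(\sigma,\tau)\in G^{n}_{z}\times H^{n}_{z}$, the piecewise isomorphism
$$K^{*}_{\Lambda_{G}(\sigma)}(X^{\sigma})\otimes_{\mathbb{Z}[q^{\pm}]^{\otimes n}} R\Lambda_{H}(\tau) \;\cong\; K^{*}_{\Lambda_{G}(\sigma)\times_{\mathbb{T}^{n}}\Lambda_{H}(\tau)}(X^{\sigma}),$$
induced by the K\"unneth map (\ref{ku}) together with the identification $K^{*}_{\Lambda_{H}(\tau)}(\mathrm{pt})=R\Lambda_{H}(\tau)$. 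Once this holds for every $(\sigma,\tau)$, taking the product over $G^{n}_{z}\times H^{n}_{z}$ and invoking (\ref{qectensor}) will identify the result with $QK^{*}_{n,G}(X)\widehat{\otimes}_{\mathbb{Z}[q^{\pm}]^{\otimes n}}QK^{*}_{n,H}(\mathrm{pt})$.

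The main obstacle is this piecewise identification. The key input is Lemma \ref{cl}, which exhibits $R\Lambda_{H}(\tau)$ as a free $R(\mathbb{T}^{n})$-module with basis $\{W_{\mu}\}$ lifting the irreducible $C_{H}(\tau)$-representations. Using such a basis, I would decompose an arbitrary $\Lambda_{G}(\sigma)\times_{\mathbb{T}^{n}}\Lambda_{H}(\tau)$-equivariant bundle over $X^{\sigma}$ into its $C_{H}(\tau)$-isotypic pieces and then absorb the $\mathbb{R}^{n}$-action on each piece into the factors $q_{i}$; freeness of $R\Lambda_{H}(\tau)$ over $R(\mathbb{T}^{n})$, combined with the fact that $H$ acts trivially on $X$, is precisely what guarantees that this fiberwise decomposition globalizes to an $R(\mathbb{T}^{n})$-module isomorphism, paralleling the strategy used for Proposition 3.18 in \cite{Huansurvey}.

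Finally, the second assertion is obtained from the first by relabelling: taking the new ``$G$'' to be the trivial group $\{e\}$ and the original $G$ to play the role of $H$, one has $\{e\}\times G\cong G$ and the trivial group tautologically acts trivially on $X$, so the stated formula for $QK^{*}_{n,G}(X)$ follows directly from the general case.
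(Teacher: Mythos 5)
Your proposal is correct and matches the intended argument: the paper itself gives no proof here, deferring to the analogues (Propositions 3.17--3.18 of \cite{Huansurvey}), and your reconstruction --- identifying $(G\times H)^{n}_{z}$ with $G^{n}_{z}\times H^{n}_{z}$, using $X^{(\sigma,\tau)}=X^{\sigma}$ and $\Lambda_{G\times H}(\sigma,\tau)\cong\Lambda_{G}(\sigma)\times_{\mathbb{T}^{n}}\Lambda_{H}(\tau)$, and then invoking Lemma \ref{cl} plus the $C_{H}(\tau)$-isotypic decomposition to get the factorwise isomorphism --- is exactly that argument carried over to general $n$. The reduction of the second statement to the first by taking the roles $\{e\}$ and $G$ is also the intended step.
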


\begin{proposition}
If $G$ acts freely on $X$,
$$QEll^*_G(X)\cong QEll^*_e(X/G).$$
\end{proposition}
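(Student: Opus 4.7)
The plan is to collapse the product in Definition~\ref{qedef} (specialized to $n=1$) down to the single identity sector, and then identify both sides as the same equivariant $K$-theory group.

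First, I would observe that since $G$ acts freely on $X$, the fixed point set $X^{\sigma}$ is empty for every non-identity torsion class $\sigma\in G^{tors}_{conj}$. Consequently, in
\[ QEll^*_G(X)=\prod_{\sigma\in G^{tors}_{conj}}K^*_{\Lambda_G(\sigma)}(X^{\sigma}), \]
only the sector $\sigma=e$ survives. For $\sigma=e$ one has $C_G(e)=G$ and $\Lambda_G(e)=G\times\mathbb{R}/\langle (e,-1)\rangle\cong G\times\mathbb{T}$, where the $\mathbb{T}$-factor acts trivially on $X=X^e$. Hence $QEll^*_G(X)\cong K^*_{G\times\mathbb{T}}(X)$.

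Next, since $G$ acts freely on $X$ and $\mathbb{T}$ acts trivially, the projection $X\to X/G$ is a principal $G$-bundle on which $\mathbb{T}$ acts compatibly. The standard free-action isomorphism for equivariant $K$-theory then gives $K^*_{G\times\mathbb{T}}(X)\cong K^*_{\mathbb{T}}(X/G)$ with trivial $\mathbb{T}$-action. On the other hand, unwinding $QEll^*_e(X/G)$ for the trivial group $e$ leaves only the sector attached to the identity, for which $\Lambda_{\{e\}}(e)=\mathbb{T}$, so $QEll^*_e(X/G)\cong K^*_{\mathbb{T}}(X/G)$ with trivial $\mathbb{T}$-action. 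The two descriptions visibly agree.

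To make this more than a coincidence on cohomology, I would check that the isomorphism is induced by a Morita equivalence of groupoids $\Lambda(X/\!\!/G)\simeq \Lambda((X/G)/\!\!/\{e\})$ coming from the quotient map $X\to X/G$. I do not expect a serious obstacle here; the only step requiring a little care is verifying that the free-action $K$-theoretic identification respects the extra $\mathbb{T}$-factor, which is automatic because the $G$- and $\mathbb{T}$-actions commute and $\mathbb{T}$ acts trivially on both $X$ and $X/G$, so the $\mathbb{T}$-equivariance is preserved strictly throughout the argument.
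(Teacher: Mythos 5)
Your proposal is correct and follows the same line the paper intends (it gives no proof here, deferring to the analogous Proposition 3.18 of \cite{Huansurvey}): freeness kills all sectors $X^{\sigma}$ with $\sigma\neq e$, the identity sector gives $K^*_{G\times\mathbb{T}}(X)$ with $\mathbb{T}$ acting trivially, and the free-quotient isomorphism of equivariant $K$-theory identifies this with $K^*_{\mathbb{T}}(X/G)=QEll^*_e(X/G)$. This is exactly the intended argument, so nothing further is needed.
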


\begin{remark}
If the equivariant cohomology theories $\{E^*_{G}(-)\}_G$ have those good properties as equivariant K-theories, then
they share corresponding properties as quasi-elliptic cohomology theory, all the properties in Section 3.3 \cite{Huansurvey}.
\end{remark}

%\section{Quasi-theories}In 2005 Lurie
%announced a result that gave sufficient conditions to functorially realize a family of 1-dimensional formal group laws by spectra given certain properties and certain
%extra data, i.e. that from a $p-$divisible group (or Barsotti Tate group). In Theorem 9 \cite{Lawsonvar} Lawson states a version of this Lurie's result.

%\begin{proposition}Let $E$ denote a cohomology theory with formal group $\mathbb{G}_E$. If it satisfies \begin{equation}\mbox{Spec}(E^0(BZ_p))=\mathbb{G}_E(Z_p)\end{equation} and %the $p-$divisible group associated to $QE_{1, G}^*(-)\otimes_{\mathbb{Z}[q^{\pm}]}\mathbb{Z}((q))$
%is $\mathbb{G}_E+Q_p/Z_p$, then for any positive integer $n$, $QE_{n, G}^*(-)\otimes_{\mathbb{Z}[q^{\pm}]^{\otimes n}}\mathbb{Z}((q))^{\otimes n}$ has $p-$divisible group %$\mathbb{G}_E+(Q_p/Z_p)^n$.\label{quasimean}\end{proposition}

%\begin{remark}Proposition \ref{quasimean} explains why we call the theories $QE_{n, G}^*(-)$ "quasi-theories".\end{remark}

\end{document}